\documentclass[12pt,a4paper,leqno]{amsart}

\usepackage{ebgaramond}

\usepackage{amssymb}
\usepackage{stmaryrd}
\usepackage[mathscr]{eucal}
\usepackage{latexsym}        % \Box \Diamond \lhd \leadsto \mho ... (à retirer si inutilisé)
\usepackage{extarrows}
\usepackage{amscd}           % environnement CD (à retirer si tu n'utilises que xy)
\everymath{\displaystyle}

\usepackage[all]{xy}
\usepackage{graphicx}
\usepackage{booktabs}
\usepackage{xcolor}
\usepackage{stackengine}
\usepackage{xspace}

\usepackage{hyperref}

\AtBeginDocument{\setbox0=\hbox{$\scriptscriptstyle x$}}

\newcommand{\bbmu}{%
  \stackengine{0pt}{\mu}{\kern0.15ex\mu}{O}{l}{F}{T}{S}%
}

\theoremstyle{plain}
\newtheorem{theorem}{Theorem}[section]

\newtheorem{proposition}[theorem]{Proposition}
\newtheorem{corollary}[theorem]{Corollary}

\newtheorem{conjecture}[theorem]{Conjecture}

\theoremstyle{definition}

\newtheorem{definition}[theorem]{Definition}

\newtheorem{remark}[theorem]{Remark}

\title{On a multiplicative perturbation of Laguerre polynomials}
\author{Julien Grivaux}
\address{Sorbonne Université, Université Paris Cité, CNRS, IMJ-PRG, F-75005 Paris, France}
\email{julien.grivaux@imj-prg.fr}

\begin{document}
\begin{abstract}
In this article, we study the family of polynomials \[P_n(s, z)=\sum_{k=0}^n s^{(k)} z^{n-k}\] for $s>0$. We prove that its roots are simple, and provide a precise localisation of them in specific angular sectors.
\end{abstract}
\maketitle
\tableofcontents
\section{Introduction}
The zeroes of hypergeometric polynomials 
\[
{}_2F_1(-n,b;c;z)=\sum_{k=0}^n (-1)^k\binom{n}{k} \frac{b^{(k)}}{c^{(k)}} z^k
\] 
for real parameters $b,c$ have been the object of intense investigations at the end of the $19^{\mathrm{th}}$ century. Three major mathematicians, Klein \cite{Klein1890}, Hilbert \cite{Hilbert1888} and Hurwitz \cite{Hurwitz1891} arrived at essentially the same result (today called the Hilbert-Klein formula) by different means: the precise counting of roots of ${}_2F_1(-n,b;c;z)$ in each of the three segments $]-\infty, 0[$, $]0, 1[$ and $]1; + \infty[$. 
These polynomials are up to a change of variable Jacobi polynomials
\[
{}_2F_1(-n,b;c;z)=\dfrac{n!}{c^{(n)}}\,P_n^{(c-1,\,b-n-c)}(1-2z).
\]
and the Hilbert-Klein formula in this setting is \cite[Thm. 6.72]{Szego1975}. Recall that the Jacobi polynomials are orthogonal on $]-1, 1[$ for the weight $(1-x)^{\alpha}(1+x)^{\beta}$ if $\alpha, \beta >-1$. It follows that on the region $c>0$ and $b>n+c-1$, all roots of ${}_2F_1(-n,b;c;X)$ are real. The hypergeometric differential equation
\[
z(1-z)y''+[c-(b+1-n)z]y' + nb y=0
\]
implies that the only multiple root that can occur is at $z=1$, and it follows from the Chu-Vandermonde identity  that this happens if and only if $b-c$ belongs to $\llbracket 0, n-2 \rrbracket$. This is in accordance with Hilbert's formula for the discriminant of these polynomials given in 
\textit{loc. cit} (\textit{see} \cite[Th 6.71, eq. (6.71.5)]{Szego1975}). 
A confluent version of the preceding story is given by the polynomials
\[
{}_1F_1(-n;s;z)=\sum_{k=0}^n (-1)^k\binom{n}{k} \frac{z^{k}}{s^{(k)}}
\] 
which are linked to generalized Laguerre polynomials via the formula
\[
{}_1F_1(-n;s;z)=\dfrac{n!}{s^{(n)}}\,L_n^{(s-1)}(z).
\]
The differential equation is Kummer's equation
\[
zy''+(s-z)y'+ny=0,
\]
it implies that there is no multiple roots at all. The discriminant of generalized Laguerre polynomials had been computed by Schur \cite{Schur1931} (\textit{see} \cite[Th 6.71, eq. (6.71.6)]{Szego1975}), and the number of real roots of this polynomial when $s$ is real is explicitly described in \cite[Thm. 6.73]{Szego1975}. If $s>0$, these polynomials are orthogonal on $\mathbb{R}_{+}$ for the measure $t^{s-1}e^{-t}dt$ so all their roots are all real. 
\par \medskip
In the present paper, we are interested in the polynomials
\[
P_n(s,z)=\sum_{k=0}^{n} s^{(k)} z^{n-k}
\]
when the parameter $s$ is positive. They satisfy the convolution equation
\[
P_n(s, z) \boxtimes_n {}_1 F_1(-n; s; -z)=1+z+ \ldots + z^n
\]
where $\boxtimes_n$ is Schur-Szeg\H{o} convolution (\textit{see} \cite[Chap.5 \S 5.5]{Rahman}). Its roots are not real at all for $s>0$ because as $s$ goes to infinity, a Puiseux expansion proves that the roots of $P_n(s, z)$ grow asymptotically as $\zeta s$ where $\zeta$ runs through $n+1$-th roots of unity apart from $1$. Second, this polynomial can have nontrivial multiple roots, the simplest example being
\[
P_2(-4/3,z)=(z-2/3)^2.
\]
There is no simple available expression for the discriminant of $P_n(s,z)$, and the differential equation satisfied by $P_n(s, z)$ is
\[
(z-n-s)y+zy'=z^{n+1}-s^{(n+1)}.
\]
Although it is not homogeneous, it gives some control of the multiple roots of $P_n(s,z)$. Our main result shows that the region $s>0$ is still a region where the roots of $P_n(s,z)$ are simple and can be localised. More precisely, 
\begin{theorem} \label{main}
Let $s$ be a positive real number. 
\begin{enumerate}
\item[(i)] If $n$ is even, then $P_n(s,z)$ has one root in any of the $n+1$-regular open angular sectors, except the one containing the negative real line.
\item[(ii)] If $n$ is odd, then $P_n(s,z)$ has one negative real root, which is in the interval $]-\left(s^{(n+1)}\right)^{1/n+1}, 0[$, and one root in any of the $n+1$-regular open angular sectors except the two adjacent to the negative real line.
\end{enumerate}
In particular, the roots of $P_n(s,z)$ are simple.
\end{theorem}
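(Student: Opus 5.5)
The plan rests on an integral representation that turns $P_n(s,z)$ into a positive average of explicit rational functions, followed by an argument-principle count on sectors. Since $s^{(k)}=\Gamma(s+k)/\Gamma(s)=\int_0^\infty t^k\,d\mu(t)$ with $d\mu(t)=\Gamma(s)^{-1}t^{s-1}e^{-t}dt$, summing the geometric series gives
\[
P_n(s,z)=\int_0^{\infty}\frac{z^{n+1}-t^{n+1}}{z-t}\,d\mu(t),\qquad \int_0^\infty t^{n+1}d\mu(t)=s^{(n+1)} .
\]
The $n+1$-regular sectors are those bounded by the rays of angle $2\pi j/(n+1)$, each centred on a root of unity $e^{(2j+1)i\pi/(n+1)}$. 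For $n$ even the negative real axis is the bisector of one of them. For $n$ odd it is a boundary ray, namely $j=(n+1)/2$.

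\textbf{Negative root ($n$ odd).} We have $P_n(s,0)=s^{(n)}>0$. Put $R=(s^{(n+1)})^{1/(n+1)}$. Then
\[
P_n(s,-R)=-\int \frac{R^{n+1}-t^{n+1}}{R+t}\,d\mu ,
\]
because $(-R)^{n+1}=R^{n+1}$ when $n$ is odd. The factor $f(t)=R^{n+1}-t^{n+1}$ satisfies $\int f\,d\mu=0$ and changes sign once, at $t=R$. The weight $1/(R+t)$ is decreasing. A Chebyshev-type comparison with the constant $1/(2R)$ therefore gives $\int f/(R+t)\,d\mu>0$, so $P_n(s,-R)<0$. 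This yields a negative root in $]-R,0[$.

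\textbf{Boundary rays.} On a ray $z=r\omega$ with $\omega^{n+1}=1$ we have $z^{n+1}=r^{n+1}$, so $P=\int f_r(t)/(z-t)\,d\mu$ with $f_r(t)=r^{n+1}-t^{n+1}$. The same trick applies: subtract the value of the kernel at $t=r$, using $\frac1{z-t}-\frac1{z-r}=\frac{t-r}{(z-t)(z-r)}$ and the fact that $f_r(t)(t-r)\le 0$. This gives
\[
P_n(s,r\omega)=\frac{1}{z-r}\Bigl[(r^{n+1}-s^{(n+1)})-\int\frac{|f_r(t)(t-r)|}{z-t}\,d\mu(t)\Bigr].
\]
For $\omega\neq\pm1$, the integral is a positive combination of the numbers $1/(z-t)$, $t\ge0$. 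All of these lie in the open half-plane where $\operatorname{Im}$ has the sign of $-\operatorname{Im}\omega$. The bracket is therefore (real) $+$ (strictly nonzero imaginary part of fixed sign), so $P$ has no zero on such a ray. Its argument along the ray is confined: the bracket stays in a fixed half-plane, and $\arg(z-r)$ moves monotonically between $\arg\omega$ and $\pi$-ish limits. The ray $\omega=1$ is trivial, since $P>0$ on $\mathbb{R}_+$.

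\textbf{Counting.} Apply the argument principle on each truncated sector $\{|z|\le \rho,\ \arg z\in[2\pi j/(n+1),2\pi(j+1)/(n+1)]\}$.
\begin{itemize}
\item On the large arc, $P\sim z^n$, so $\arg P$ increases by about $2\pi n/(n+1)$.
\item On the two rays, the confinement just described bounds the variation of $\arg P$.
\end{itemize}
I would aim to show that the total variation is exactly $2\pi$ for sectors not touching the negative axis, and $0$ for the excluded sector(s). For $n$ even there is an alternative for the excluded sector: it is symmetric about $\mathbb{R}_-$, and $P$ is real and has no negative root there, since $(-x)^{n+1}<0$ makes the integrand positive-signed. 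This gives $n$ roots in the $n$ remaining sectors (resp.\ $n-1$ plus the negative one for $n$ odd). Because the degree is $n$, each sector contains exactly one root and there are no others. Simplicity follows because the sectors are disjoint.

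The main obstacle is the bookkeeping of $\arg P$ along the boundary rays. One must check that the half-plane information on the bracket, combined with the factor $1/(z-r)$, pins the winding to exactly $2\pi$ rather than merely bounding it. If this fails, I would instead deform $s$ from $+\infty$ (where the Puiseux asymptotics $\zeta s$ put one root per allowed sector) down to any $s>0$. Since no root can cross a boundary ray, by the step above, the count in each sector is then constant.
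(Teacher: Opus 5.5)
\textbf{There is a genuine gap: the one-root-per-sector count, which is the real content of the theorem, is not established by either of your routes.}

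What you do prove is correct.
\begin{itemize}
\item The nonvanishing argument on the rays $z=r\omega$ with $\omega^{n+1}=1$, $\omega\neq\pm1$, is correct. It is the paper's boundary-ray proposition, with the linear form $\ell$ replaced by the identity $\frac{1}{z-t}-\frac{1}{z-r}=\frac{t-r}{(z-t)(z-r)}$.
\item The Chebyshev argument for $P_n(s,-R)<0$ is also correct. It replaces the paper's differential-equation argument, but it only gives existence of a root in $]-R,0[$; uniqueness must come from the count.
\end{itemize}

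\textbf{The fallback rests on a false premise.} As $s\to+\infty$ the roots behave like $\zeta s$ with $\zeta^{n+1}=1$, $\zeta\neq1$. These limiting directions are exactly the boundary rays of the $(n+1)$-regular sectors, the very rays you have just shown to be root-free. So the leading term says nothing about which side of its ray each root lies on, and the starting configuration of your deformation is not established. For $n=2$, nothing at this order excludes the conjugate pair near $e^{\pm2\pi i/3}s$ sitting in the excluded sector.

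To use $s\to+\infty$ you need the next Puiseux term. For $\zeta=e^{i\psi}$ one gets $\arg z=\psi-\frac{1}{2s}\cot\frac{\psi}{2}+O(s^{-2})$, which moves each non-real root away from $]-\infty,0[$. The paper instead lets $s\to0^+$. There the roots behave like $(-(n-1)!\,s)^{1/n}$, and the arguments of the $n$-th roots of $-1$ lie strictly inside the allowed sectors, or equal $\pi$ when $n$ is odd.

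\textbf{The argument-principle route is unfinished, and one claim in it is wrong.} You say $\arg(z-r)$ moves along the ray. In fact $r=|z|$, so $z-r=r(\omega-1)$ has constant argument, and this is precisely what closes the bookkeeping:
\begin{itemize}
\item On the ray, $\arg P=\arg B-\arg(\omega-1)$, where your bracket $B$ stays in a fixed open half-plane for $r>0$.
\item Hence the variation of $\arg P$ from $r=0$ (where $P=s^{(n)}>0$) to $r=+\infty$ (where $P\sim r^n\omega^n$) lies in $[-\pi,\pi]$ and is congruent to $-\arg\omega$ modulo $2\pi$. It therefore equals the representative of $-\arg\omega$ in $]-\pi,\pi[$.
\item Adding the contribution $2\pi n/(n+1)$ of a large arc gives exactly $2\pi$ for every sector whose closure avoids $]-\infty,0[$. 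It gives $0$ for the sector containing $]-\infty,0[$ when $n$ is even.
\end{itemize}

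With this supplied, your proof would be complete. It would also differ from the paper's in needing no continuity in $s$ at all. As written, though, the count is not established.
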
 
Below is a Maple drawing illustrating the theorem for $n=13$.
\par \medskip
\begin{figure}[ht]
  \includegraphics[width=0.8\linewidth]{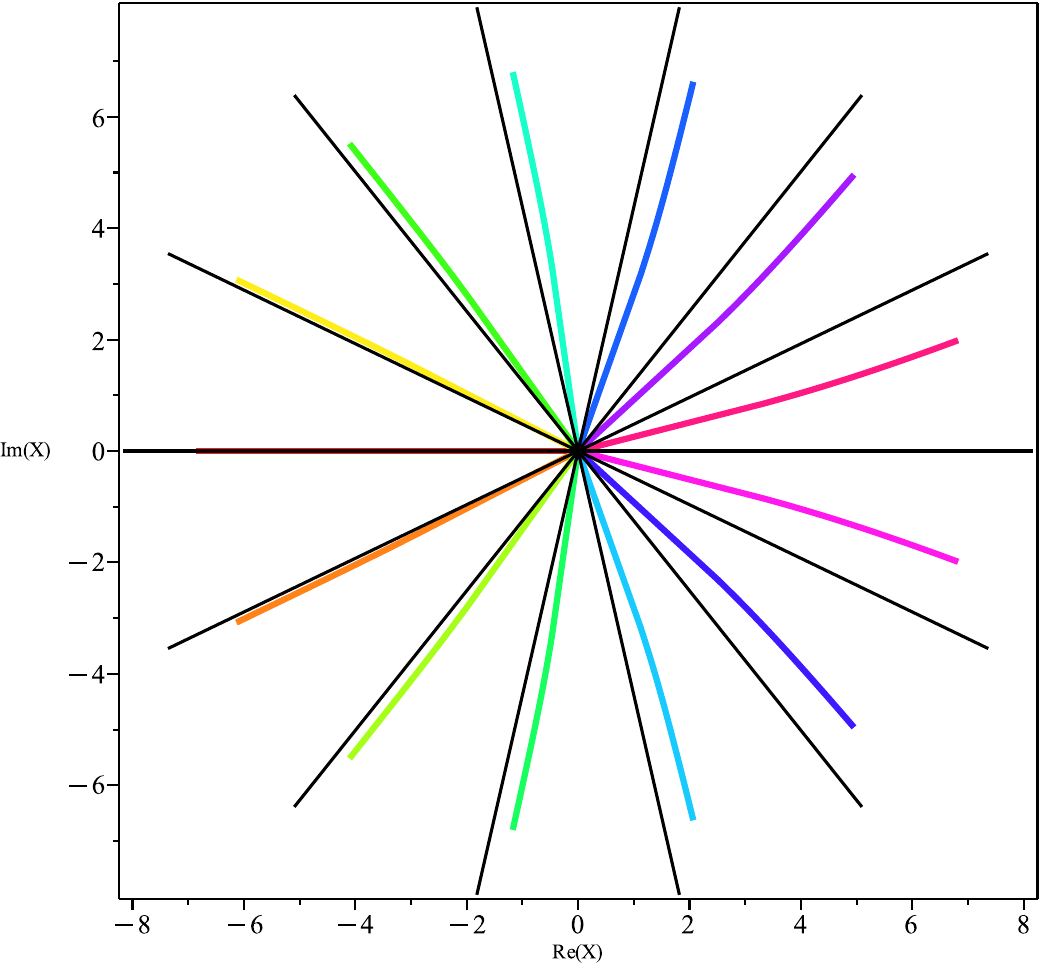}
  \caption{Roots of $P_{13}(s, z)$ for $s$ in $[0,2]$.}
  \label{fig:racines}
\end{figure}

\section{Algebraic identities}
For any complex number $s$ and any nonnegative integer $k$, the ascending factorial $s^{(k)}$ is defined as $s^{(k)}=1$ if $k=0$, and 
\[
s^{(k)}=s(s+1) \ldots (s+k-1)
\] 
otherwise.

\begin{definition}
For any positive integer $n$, the bivariate polynomial $P_n(s,z)$ is defined by $P_n(s,z)=\sum_{k=0}^{n} s^{(k)} z^{n-k}$.
\end{definition}

We state and prove elementary identities relating the polynomials $P_n(s,z)$.
\begin{proposition} \label{basic}
The following identities hold: 
\begin{enumerate}
\item[(i)] $sP_{n}(s+1, z)=(s+n)P_n(s,z)-z \partial_z P_n(s,z)$.
\item[(ii)] $sP_n(s+1,z)=zP_n(s,z)+s^{(n+1)}-z^{n+1}$.
\item[(iii)] $(z-s-n)P_n(s, z)+z \partial_z P_n(s,z)=z^{n+1}-s^{(n+1)}$.
\end{enumerate}
\end{proposition}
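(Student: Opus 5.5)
The plan is to prove the three identities by comparing coefficients of $z^{n-k}$ on both sides. The only tools needed are two elementary relations for the rising factorial:
\[
s\,(s+1)^{(k)} = s^{(k+1)}, \qquad s^{(k+1)} = s^{(k)}(s+k).
\]
I will prove (ii) first, then (i), and obtain (iii) by combining them.

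For (ii), I write
\[
sP_n(s+1,z)=\sum_{k=0}^n s\,(s+1)^{(k)} z^{n-k}=\sum_{k=0}^n s^{(k+1)} z^{n-k}.
\]
Reindexing with $j=k+1$ gives $\sum_{j=1}^{n+1} s^{(j)} z^{n+1-j}$. On the other hand,
\[
zP_n(s,z)=\sum_{j=0}^{n} s^{(j)} z^{n+1-j}.
\]
This sum contains the extra term $z^{n+1}$ (the $j=0$ term) and lacks the term $s^{(n+1)}$ (the $j=n+1$ term). Correcting for both gives (ii).

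For (i), I use $z\partial_z z^{n-k}=(n-k)z^{n-k}$. The coefficient of $z^{n-k}$ on the right-hand side is then
\[
(s+n)s^{(k)}-(n-k)s^{(k)}=(s+k)s^{(k)}=s^{(k+1)}.
\]
This equals the coefficient $s\,(s+1)^{(k)}$ of $z^{n-k}$ in $sP_n(s+1,z)$, which proves (i).

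Finally, (iii) follows by equating the two expressions for $sP_n(s+1,z)$ given by (i) and (ii):
\[
(s+n)P_n - z\partial_z P_n = zP_n + s^{(n+1)} - z^{n+1}.
\]
Rearranging gives
\[
(z-s-n)P_n+z\partial_zP_n=z^{n+1}-s^{(n+1)},
\]
which is (iii). There is no real obstacle here. The only point requiring care is the index shift in (ii), namely keeping track of the boundary terms $z^{n+1}$ and $s^{(n+1)}$.
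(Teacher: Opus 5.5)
Your proposal is correct and follows essentially the same route as the paper: a monomial-by-monomial check using $s^{(k+1)}=(s+k)s^{(k)}=s\,(s+1)^{(k)}$ for (i), an index shift tracking the boundary terms $z^{n+1}$ and $s^{(n+1)}$ for (ii), and equating the two expressions for $sP_n(s+1,z)$ to get (iii). Proving (ii) before (i) is an immaterial reordering.
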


\begin{proof}
For (i), each monomial $s^{(k)}z^{n-k}$ satisfies the identity. Indeed:
\begin{align*}
(s+n)s^{(k)}z^{n-k} -z \left((n-k) s^{(k)} z^{n-k-1}\right) 
&= (s+k) s^{(k)} z^{n-k}\\
&= s^{(k+1)} z^{n-k} \\
&= s (s+1)^{(k)} z^{n-k}.
\end{align*}
The second identity is straightforward: 
\begin{align*}
zP_n(s, z)+s^{(n+1)}-z^{n+1} &= \sum_{k=1}^{n} s^{(k)} z^{n-(k-1)}+s^{(n+1)} \\
&= \sum_{\ell=0}^{n} s^{(\ell+1)} z^{n-\ell} \\
&= sP_{n}(s+1, z).
\end{align*}
The third identity is obtained by equating the two right members of (i) and (ii).
\end{proof}

\begin{corollary} \label{yolo}
Let $\Delta_n(s)$ be the discriminant of $P_n(s,z)$. Then:
\begin{enumerate}
\item[(i)] $\mathrm{res}_z(z^{n+1}-s^{(n+1)}, P_n(s, z)) = (-1)^{n(n+1)/2} s^{(n)} \Delta_n(s)$.
\item[(ii)] $s^n \mathrm{res}_z(P_n(s, z),P_n(s+1,z)) =  (-1)^{n(n-1)/2}s^{(n)} \Delta_n(s)$.
\end{enumerate} 
\end{corollary}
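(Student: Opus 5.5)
Since $P_n(s,z)$ is monic of degree $n$ in $z$, I would express every resultant as a product over its roots and then use the functional identities of Proposition~\ref{basic} to evaluate the relevant polynomials at those roots. Work over an algebraic closure of $\mathbb{Q}(s)$, or fix a complex value of $s$; both sides are polynomial in $s$, so either setting suffices. Let $\alpha_1,\dots,\alpha_n$ be the roots of $P_n(s,z)$, counted with multiplicity. For a monic polynomial $P$ of degree $n$ I use the standard facts
\[
\mathrm{res}(P,Q)=\prod_i Q(\alpha_i), \qquad \mathrm{res}(Q,P)=(-1)^{\deg P\deg Q}\,\mathrm{res}(P,Q), \qquad \Delta(P)=(-1)^{n(n-1)/2}\prod_i P'(\alpha_i).
\]
I also note that $P_n(s,0)=s^{(n)}$, so $\prod_i \alpha_i=(-1)^n s^{(n)}$.

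For (i), put $Q(z)=z^{n+1}-s^{(n+1)}$. Since $\deg Q\cdot\deg P_n=n(n+1)$ is even, $\mathrm{res}_z(Q,P_n)=\prod_i Q(\alpha_i)$. Evaluating Proposition~\ref{basic}(iii) at $z=\alpha_i$ kills the first term and gives $Q(\alpha_i)=\alpha_i\,\partial_zP_n(s,\alpha_i)$. Therefore
\[
\mathrm{res}_z(Q,P_n)=\Bigl(\prod_i\alpha_i\Bigr)\prod_i \partial_zP_n(s,\alpha_i)=(-1)^n s^{(n)}\,(-1)^{n(n-1)/2}\Delta_n(s).
\]
The total sign is $(-1)^{n+n(n-1)/2}=(-1)^{n(n+1)/2}$, which is the claimed formula.

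For (ii), evaluate Proposition~\ref{basic}(ii) at $z=\alpha_i$. This gives $sP_n(s+1,\alpha_i)=s^{(n+1)}-\alpha_i^{n+1}=-Q(\alpha_i)$. Taking the product over $i$,
\[
s^n\,\mathrm{res}_z\bigl(P_n(s,z),P_n(s+1,z)\bigr)=\prod_i sP_n(s+1,\alpha_i)=(-1)^n\prod_i Q(\alpha_i).
\]
Substituting the result of (i), this equals $(-1)^n(-1)^{n(n+1)/2}s^{(n)}\Delta_n(s)$. Since $n(n+1)/2-n(n-1)/2=n$, the sign is $(-1)^{n(n-1)/2}$, as claimed.

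There is no real obstacle here. The only points needing care are the sign bookkeeping and fixing the discriminant convention $\Delta=(-1)^{n(n-1)/2}\mathrm{res}(P,P')$ for monic $P$. Applying the identities at possibly repeated roots causes no problem, because the product formula for resultants holds with multiplicities.
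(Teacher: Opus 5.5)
Your proof is correct and is essentially the paper's argument: both use Proposition~\ref{basic}(iii) and (ii) to reduce $z^{n+1}-s^{(n+1)}$ and $sP_n(s+1,z)$ modulo $P_n(s,z)$ to $z\,\partial_z P_n(s,z)$ and $-(z^{n+1}-s^{(n+1)})$, and then split off $\prod_i\alpha_i=(-1)^n s^{(n)}$ and $\mathrm{res}_z(P_n,\partial_z P_n)=(-1)^{n(n-1)/2}\Delta_n(s)$. The only difference is that you write each resultant as a product over the roots instead of manipulating resultants formally, which keeps the signs explicit. In the paper's chain for (ii), swapping the arguments of a resultant of two degree-$n$ polynomials, and replacing a degree-$n$ first argument by the degree-$(n+1)$ polynomial $s^{(n+1)}-z^{n+1}$, each cost an unrecorded factor $(-1)^n$, and these two omissions happen to cancel.
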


\begin{proof}
For (i), we compute:
\begin{align*}
&\mathrm{res}_z(z^{n+1}-s^{(n+1)}, P_n(s, z)) \\
&=\mathrm{res}_z((z-s-n)P_n(s, z)+z \partial_z P_n(s,z), P_n(s,z)) \\
&=\mathrm{res}_z(P_n(s,z),z \partial_z P_n(s, z)) \\
&=\mathrm{res}_z(P_n(s,z),z) \times \mathrm{res}_z(P_n(s,z), \partial_z P_n(s, z)) \\
&=(-1)^n P_n(s,0) \times (-1)^{n(n-1)/2} \Delta_n(s) \\
&=(-1)^{n(n+1)/2} s^{(n)} \Delta_n(s).
\end{align*}
Similarly, for (ii),
\begin{align*}
&s^n \mathrm{res}_z(P_n(s, z),P_n(s+1,z)) \\
&=\mathrm{res}_z(sP_n(s+1,z),P_n(s, z)) \\
&= \mathrm{res}_z(z P_n(s,z)+s^{(n+1)}-z^{n+1}, P_n(s,z)) \\
&=\mathrm{res}_z(s^{(n+1)}-z^{n+1}, P_n(s,z)) \\
&=(-1)^n \mathrm{res}_z(z^{n+1}-s^{(n+1)}, P_n(s,z)) \\
&=(-1)^{n(n-1)/2} s^{(n)} \Delta_n(s) \qquad \textrm{by} \quad \textrm{(i)}. 
\end{align*}
\end{proof}

\section{Determinantal expression}
In this section, we give a general expression of the discriminant $\Delta_n(s)$ in terms of a specific determinant. To do this, we place ourselves in a general framework. 
\par \medskip
For $a=(a_1, \ldots, a_{n+1}) \in \mathbb{C}^{n+1}$, we put
\begin{align*}
 M(a) &= \begin{vmatrix}
1      & a_1     & a_1 a_2     & \ldots & a_1 a_2 \ldots a_n \\
1      & a_2     & a_2 a_3     & \ldots & a_2 a_3 \ldots a_{n+1} \\
\vdots & \vdots  & \vdots      &        & \vdots \\
1      & a_{n+1} & a_{n+1} a_1 & \ldots & a_{n+1} a_1 \ldots a_{n-1}
\end{vmatrix}\\
C_a&=a_1 \ldots a_{n+1} \\
P(z) &= z^n + a_1 z^{n-1} + \ldots + a_{n-1}z + a_{n} \in \mathbb{C}[z] \\
Q(t) &=\mathrm{res}_z(z^{n+1}-t, P(z)) \in \mathbb{C}[t]. 
\end{align*}

\begin{proposition}
$Q(C_a)=(-1)^{n(n+1)/2} a_1^n a_2^{n-1} \ldots a_n \times M(a)$.
\end{proposition}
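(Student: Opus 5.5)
The plan is to compare both sides as polynomials in the single variable $a_{n+1}$, with $a_1,\dots,a_n$ generic and nonzero, and to pass to all $a$ at the end by Zariski density.

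\emph{Step 1: degrees.} The polynomial $P$ does not involve $a_{n+1}$. Since $C_a=a_1\cdots a_n\,a_{n+1}$ and $Q$ has degree $n$ in $t$, the left-hand side $Q(C_a)$ has degree exactly $n$ in $a_{n+1}$. On the right, $M(a)$ is multilinear in its rows, and each row of the matrix is affine in $a_{n+1}$: row $n+1$ is $(1,a_{n+1},a_{n+1}a_1,\dots)$, and row $i\le n$ contains $a_{n+1}$ only in its last $i-1$ entries, to the first power. Hence $\deg_{a_{n+1}}M(a)\le n$.

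\emph{Step 2: roots.} Write $Q(t)=\prod_{P(\omega)=0}(\omega^{n+1}-t)$, using the sign convention for resultants of Corollary~\ref{yolo}. For generic $a_1,\dots,a_n$, the roots $\omega_1,\dots,\omega_n$ of $P$ are distinct, nonzero, and have distinct $(n+1)$-th powers. So $Q(C_a)$, as a polynomial in $a_{n+1}$, has the $n$ distinct simple roots
\[
a_{n+1}=\omega_j^{n+1}/(a_1\cdots a_n).
\]
It then suffices to show that $M(a)$ vanishes at each of these values, that is, to exhibit a nonzero kernel vector of the matrix when $a_1\cdots a_{n+1}=\omega^{n+1}$ and $P(\omega)=0$.

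For this I would use the Horner quantities
\[
h_0=1,\qquad h_k=\omega h_{k-1}+a_k\ (1\le k\le n),
\]
so that $h_n=P(\omega)=0$. The relation $\prod a_i=\omega^{n+1}$ makes the cyclic structure consistent: the shift relation between consecutive rows says that entry $k+1$ of row $i$ equals $a_i$ times entry $k$ of row $i+1$. I would look for a left or right null vector whose entries are combinations of powers of $\omega$ and the $h_k$, probably normalised by the partial products $a_1\cdots a_k$. The kernel condition on row $i$ should then reduce to the Horner recursion, with the wrap-around row closed by $h_n=0$ together with $\prod a_i=\omega^{n+1}$.

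\emph{Step 3: leading coefficient.} Once Steps 1 and 2 are done, $M(a)$ and $Q(C_a)$ have the same roots in $a_{n+1}$, so $M(a)=\lambda\,Q(C_a)$ with $\lambda$ independent of $a_{n+1}$. To find $\lambda$, compare the coefficients of $a_{n+1}^n$. In $Q(C_a)$ this coefficient is $(-1)^n(a_1\cdots a_n)^n$. In $M(a)$, the coefficient of $a_{n+1}^n$ comes from choosing the $a_{n+1}$-part in $n$ of the rows. It should be a smaller determinant of the same cyclic type, so I would compute it by induction on $n$, or directly by expanding along the first column. Matching it against $(-1)^{n(n+1)/2}a_1^na_2^{n-1}\cdots a_n$ gives the claimed constant.

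The main obstacle is Step 2: writing down the correct explicit kernel vector and checking the wrap-around row. If that fails, the fallback is a different route. I would compute $Q(C_a)$ as the determinant of multiplication by $P$ on $\mathbb{C}[z]/(z^{n+1}-C_a)$ in the basis $1,z,\dots,z^n$. Then I would rescale this basis by the partial products $a_1\cdots a_k$ and perform explicit column operations to transform that matrix into the matrix of $M(a)$, tracking the determinant factors $a_1^na_2^{n-1}\cdots a_n$ and the sign.
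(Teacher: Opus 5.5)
Your Step~2 is not just unfinished: as you set it up, it cannot be finished. The Horner recursion $h_k=\omega h_{k-1}+a_k$ reads $P(z)=z^n+a_1z^{n-1}+\cdots+a_n$ literally as printed, and for that $P$ the identity is false. Take $n=2$ and $a=(2,1,1)$. Then $M(a)=-1$, so the right-hand side is $-a_1^2a_2M(a)=4$. But $P=(z+1)^2$ gives $Q(C_a)=\mathrm{res}_z(z^3-2,(z+1)^2)=(-3)^2=9$.

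The printed definition of $P$ is a misprint. The intended polynomial is the one that makes $P=P_n(s,z)$ for $a=(s,\ldots,s+n)$ in Proposition~\ref{malin}. It is also the one whose multiplication matrix, with first column $a_1\cdots a_n,\ldots,a_1,1$, the paper's proof writes down:
\[
P(z)=\sum_{k=0}^{n}a_1a_2\cdots a_k\,z^{n-k}.
\]
For $a=(2,1,1)$ this is $z^2+2z+2$, and then $Q(2)=(2i)(-2i)=4$, as it should be. With your $P$, $M(a)$ does not vanish at all the values $a_{n+1}=\omega^{n+1}/(a_1\cdots a_n)$ you target. If it did, your Steps~1 and~3 would prove the false identity. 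So there is no null vector to find, and your fallback run with the literal $P$ cannot produce $M(a)$ either.

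With the correct $P$, your plan works, and the missing idea is short. Put $P^{(i)}(z)=\sum_{k=0}^{n}a_ia_{i+1}\cdots a_{i+k-1}z^{n-k}$, with indices taken mod $n+1$. Then $P^{(1)}=P$, and row $i$ of $M(a)$ paired with $(\omega^n,\ldots,\omega,1)$ equals $P^{(i)}(\omega)$. Exactly as in Proposition~\ref{basic}~(ii),
\[
a_iP^{(i+1)}(z)=zP^{(i)}(z)+C_a-z^{n+1}.
\]
So if $P(\omega)=0$, $\omega^{n+1}=C_a$ and all $a_i\neq0$, every $P^{(i)}(\omega)$ vanishes. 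Hence $(\omega^n,\ldots,\omega,1)^{T}$ is a right null vector; no Horner terms are needed.

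Step~3 is also simpler than a cyclic determinant. Row $1$ contains no $a_{n+1}$, so the coefficient of $a_{n+1}^n$ is the determinant that keeps row $1$ and replaces rows $2,\ldots,n+1$ by their $a_{n+1}$-parts. Expand along the first column and pull $a_i\cdots a_n$ out of row $i$ for $2\le i\le n$. What remains is a row-reversed unitriangular matrix, so the coefficient is $(-1)^{n(n-1)/2}a_2a_3^2\cdots a_n^{n-1}$. Multiplied by $(-1)^{n(n+1)/2}a_1^na_2^{n-1}\cdots a_n$, this gives exactly $(-1)^n(a_1\cdots a_n)^n$, the leading coefficient of $Q(C_a)$.

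Completed this way, your argument is a genuine alternative to the paper's. The paper takes the multiplication-by-$P$ matrix on $\mathbb{C}[z]/(z^{n+1}-C_a)$, divides column $j$ by $a_1\cdots a_j$, transposes and reverses an order of $n+1$ lines. That is a direct computation with no genericity needed. Your route explains why $M(a)$ vanishes on the zero set of $Q(C_a)$, through the cyclic shift identity, at the cost of a genericity and density argument.
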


\begin{proof}
We write $Q(C_a)$ as the determinant of the multiplication by the polynomial $P$ on $\mathbb{C}[z]/(z^{n+1}-C_a)$, which gives
\[
\begin{vmatrix}
a_1 \ldots a_n & C_a & a_1 C_a & \ldots \, & \\
a_1 \ldots a_{n-1} & a_1 \ldots a_n & C_a & \ldots &  \\
\vdots & \vdots & a_1 \ldots a_{n-1} & \ldots &  \\
\vdots & \vdots & \ldots & & \\
a_1 & \vdots & \ldots & & \\
1 & a_1 & a_1 a_2 & \ldots &
\end{vmatrix}
\]
After factorizing $a_1$ in the second column, $a_1 a_2$ in the third column and so on, taking the transpose, and reordering the lines from bottom to top, we get the required expression. 
\end{proof}

\begin{definition}
For any positive integer $n$ and any complex number $s$, we put $\theta_n(s)=M(s, s+1, \ldots, s+n)$.
\end{definition}

\begin{proposition} \label{malin} The two following identities hold: 
\begin{enumerate}
\item[(i)] $\Delta_n(s)=s^{(n-1)} s^{(n-2)} \ldots s^{(1)}\, \theta_n(s)$.
\item[(ii)] $\mathrm{res}_z(z^{n+1}-s^{(n+1)}, P_n(s, z))= (-1)^{n(n+1)/2} s^{(n)} s^{(n-1)} \ldots s^{(1)}\, \theta_n(s)$.
\end{enumerate}
\end{proposition}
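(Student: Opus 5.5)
We apply the preceding proposition with the specialisation $a=(s, s+1, \ldots, s+n)\in \mathbb{C}^{n+1}$. For this choice, the products $a_1 a_2\cdots a_k$ are exactly $s^{(k)}$. Hence the auxiliary polynomial $P(z)=z^n+a_1z^{n-1}+\ldots+a_n$ is not literally $P_n(s,z)$, since its coefficients are $a_k$ rather than $a_1\cdots a_k$.

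So the first step is to check the convention more carefully. In the determinant written in the proof of the preceding proposition, the entries are $a_1\cdots a_n$, $a_1\cdots a_{n-1}$, \ldots, $a_1$, $1$, and the constant $C_a=a_1\cdots a_{n+1}$. This is the multiplication matrix of the polynomial $\sum_k (a_1\cdots a_k) z^{n-k}$ modulo $z^{n+1}-C_a$. Thus the intended $P$ is $z^n+a_1z^{n-1}+a_1a_2z^{n-2}+\ldots+a_1\cdots a_n$, and the displayed definition of $P$ should be read this way. With this reading, and with the specialisation above, we get $P=P_n(s,z)$ and $C_a=s(s+1)\cdots(s+n)=s^{(n+1)}$. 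Therefore $Q(C_a)=\mathrm{res}_z(z^{n+1}-s^{(n+1)},P_n(s,z))$.

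Next we compute the prefactor. It is $a_1^n a_2^{n-1}\cdots a_n=s^n(s+1)^{n-1}\cdots(s+n-1)$. Regrouping, this equals $\prod_{k=1}^{n} s^{(k)}$, because $s^{(k)}=s(s+1)\cdots(s+k-1)$ and the factor $s+j$ appears in $s^{(k)}$ exactly for $k\ge j+1$, that is, $n-j$ times. The proposition therefore gives (ii) directly:
\[
\mathrm{res}_z(z^{n+1}-s^{(n+1)},P_n(s,z))=(-1)^{n(n+1)/2}s^{(n)}s^{(n-1)}\cdots s^{(1)}\theta_n(s).
\]

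For (i), we compare with Corollary~\ref{yolo}(i), which gives the same resultant as $(-1)^{n(n+1)/2}s^{(n)}\Delta_n(s)$. Equating the two expressions gives $s^{(n)}\Delta_n(s)=s^{(n)}s^{(n-1)}\cdots s^{(1)}\theta_n(s)$ as polynomial identities in $s$. Cancelling the nonzero polynomial $s^{(n)}$ in the integral domain $\mathbb{C}[s]$ yields (i) for all complex $s$.

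The only real obstacle is the bookkeeping. One must confirm that the coefficient convention for $P$ matches the matrix used in the previous proof, and that the prefactor product regroups exactly into the ascending factorials. The rest is a direct substitution.
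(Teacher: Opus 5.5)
Your proof is correct and follows the paper's own argument. You specialise the general determinantal proposition at $a=(s,s+1,\ldots,s+n)$, identify the prefactor $a_1^n a_2^{n-1}\cdots a_n$ with $s^{(n)}s^{(n-1)}\cdots s^{(1)}$, and compare with Corollary~\ref{yolo}(i), cancelling $s^{(n)}$. Your observation that $P$ must be read as $\sum_k (a_1\cdots a_k)z^{n-k}$, as the multiplication matrix shows, is right, and it is exactly what the paper uses without comment when it asserts $P(z)=P_n(s,z)$.
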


\begin{proof} 
Thanks to corollary \ref{yolo} (i), since $s^{(n+1)}=C(s, s+1, \ldots, s+n)$, we have $P(z)=P_n(s, z)$ so
\begin{align*}
(-1)^{n(n+1)/2} & s^{(n)} \Delta_n(s) = \mathrm{res}_z(z^{n+1}-s^{(n+1)}, P_n(s, z)) \\
&= Q(C(s, s+1, \ldots, s+n)) \\
&= (-1)^{n(n+1)/2} s^{(n)} s^{(n-1)} \ldots s^{(1)} M(s, s+1, \ldots, s+n)
\end{align*}
whence the result.
\end{proof}

\section{Real roots}

\begin{proposition} \label{locus}
If $s > 0$, $P_n(s, z)$ has no real root if $n$ is even, and has one real root lying in $]-\left(s^{(n+1)}\right)^{1/n+1}, 0[$ if $n$ is odd. 
\end{proposition}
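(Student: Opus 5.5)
Since every coefficient $s^{(k)}$ is positive for $s>0$, $P_n(s,z)>0$ for $z\ge 0$, so any real root is negative. The natural tool is the identity of Proposition \ref{basic} (ii),
\[
(z-1)\,P_n(s,z) \;=\; ?\,
\]
but more convenient is a direct comparison with a geometric-type sum. Write $z=-x$ with $x>0$, so that
\[
(-1)^n P_n(s,-x)=\sum_{k=0}^n (-1)^k s^{(k)} x^{n-k}=:f(x).
\]
This is an alternating sum whose terms $t_k=s^{(k)}x^{n-k}$ satisfy $t_{k+1}/t_k=(s+k)/x$. Sign changes of $f$ on $]0,\infty[$ are what we must count.

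\emph{Step 1: no root at or beyond $c=\left(s^{(n+1)}\right)^{1/(n+1)}$.} I would use Proposition \ref{basic} (iii) evaluated at a real root $z=-x$ of $P_n(s,\cdot)$: it gives $-x\,\partial_zP_n(s,-x)=(-x)^{n+1}-s^{(n+1)}$. Combined with Proposition \ref{basic} (ii), at a root we get $sP_n(s+1,-x)=s^{(n+1)}-(-x)^{n+1}$. For $n$ even, $(-x)^{n+1}<0$, so the right side is positive. For $n$ odd, it equals $s^{(n+1)}-x^{n+1}$. This relation alone does not exclude roots, so instead I will argue directly.

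\emph{Step 2: monotonicity via the derivative identity.} From (iii), at any real root $z_0$ we have $z_0\,\partial_zP_n(s,z_0)=z_0^{n+1}-s^{(n+1)}$.
\begin{itemize}
\item[] \emph{Case $n$ even.} $z_0<0$ gives $z_0^{n+1}-s^{(n+1)}<0$, hence $\partial_zP_n(s,z_0)>0$. So every real root is a simple crossing from negative to positive values. Two consecutive such crossings are impossible, so there is at most one real root. But $P_n$ has even degree and a positive leading coefficient, so $P_n\to+\infty$ at $-\infty$. A single upward crossing would force the values before it to be negative, contradicting $P_n(-\infty)=+\infty$. Hence there is no real root.
\item[] \emph{Case $n$ odd.} Now $z_0^{n+1}>0$. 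If $|z_0|<c$, the sign of $z_0\partial_zP_n$ is negative, so $\partial_zP_n(s,z_0)>0$; if $|z_0|>c$, then $\partial_zP_n(s,z_0)<0$; and $|z_0|=c$ would give a double root at $-c$.
\end{itemize}

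\emph{Step 3: the odd case, which is the main obstacle.} For $n$ odd there is at least one real root, since the degree is odd, and it is negative. I will show that every real root lies in $]-c,0[$. Once that is known, all real roots are upward crossings by Step 2, so there is exactly one.

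To localise the roots I would evaluate at $-c$ and compare with $P_n(s,0)=s^{(n)}>0$. The key is to exclude roots $\le -c$. Suppose the leftmost root $z_0$ satisfied $z_0\le -c$. Since $P_n\to-\infty$ at $-\infty$ for odd degree, the crossing at the leftmost root must be upward, or tangential if it is a multiple root. But Step 2 shows $\partial_zP_n(s,z_0)<0$ when $z_0<-c$, which is a contradiction. The remaining case $z_0=-c$ forces $\partial_z P_n=0$ there, i.e.\ a multiple root. To rule it out, I would use (ii): at $z=-c$ with $n$ odd it gives $sP_n(s+1,-c)=s^{(n+1)}-c^{n+1}=0$. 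So $-c$ would be a common root of $P_n(s,\cdot)$ and $P_n(s+1,\cdot)$, and this is where extra care is needed.

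I expect this to be handled by an induction on $s\mapsto s+1$, or, more simply, by a perturbation argument. The root set varies continuously in $s$, and a root sitting exactly at $-c(s)$ while none lies to its left contradicts the sign analysis for nearby $s$. Alternatively, one can check directly that $-c$ is not a root by pairing the terms of the alternating sum. Once $z_0>-c$ is established, every real root lies in $]-c,0[$, and uniqueness follows from Step 2.
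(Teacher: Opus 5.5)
Your even case is correct. You use the sign of $\partial_z P_n$ at every real root, whereas the paper evaluates identity (iii) of Proposition~\ref{basic} at a global minimum; both work. In the odd case you also correctly reduce everything to excluding a leftmost root $z_0=-c$, which would be a multiple root. But that is exactly where your argument stops. None of the three remedies you list is carried out, and the perturbation one does not work as sketched. A multiple root at $-c(s_0)$ with $P_n<0$ on both sides could a priori come from a pair of complex conjugate roots meeting the real axis at $s=s_0$. For nearby $s$ there would then be no real root near $-c(s)$ at all, so the first-order sign analysis of your Step~2 at nearby parameters gives no contradiction. Likewise, noticing via (ii) that $-c$ would also be a root of $P_n(s+1,\cdot)$ leads nowhere by itself. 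Step~1, with its literal ``$?$'', is a dead end and should be removed.

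The missing idea is to use the differential equation to second order, as the paper does. Differentiating (iii) gives $P+(x-n-s+1)P'+xP''=(n+1)x^n$. So at any $\alpha\neq 0$ with $P(\alpha)=P'(\alpha)=0$ one has $P''(\alpha)=(n+1)\alpha^{n-1}$, which is $>0$ for $n$ odd. A multiple real root is therefore a strict local minimum of $P$ with value $0$, so $P>0$ on a punctured neighbourhood of it. This is incompatible with $P<0$ on $]-\infty,z_0[$ for the leftmost root $z_0$. Hence $P'(z_0)>0$, and (iii) then gives $z_0\in]-c,0[$ and uniqueness exactly as in your Step~2.

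Your ``direct check'' can also be made rigorous, but not by pairing consecutive terms of the alternating sum, since those pairs change sign. Set $u_k=s^{(k)}/c^k$; this sequence is strictly log-convex, hence strictly convex, with $u_0=u_{n+1}=1$. One finds
\[
P_n(s,-c)=-\frac{c^n}{2}\sum_{k=0}^{n}(-1)^k\,(u_k-u_{k+1})<0,
\]
because $u_k-u_{k+1}$ is strictly decreasing and $n+1$ is even. As your proposal stands, though, this step is only asserted, not proved.
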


\begin{proof}
Let us consider the differential equation 
\begin{equation} \label{1}
(x-n-s) P(x) + x P'(x)=x^{n+1}-s^{(n+1)}
\end{equation}
satisfied by $P_n(s, z)$.
\par \medskip
Assume that $n$ is even. If $\alpha$ realizes the minimum of the polynomial $P$ on $\mathbb{R}$, we may assume that $\alpha <0$ but then $(\alpha-n-s)P(\alpha)=\alpha^{n+1}-s^{(n+1)}<0$ by $\eqref{1}$. Since $\alpha-n-s <0$, $P(\alpha)>0$, hence $P >0$ on $\mathbb{R}$.
\par \medskip
Assume that $n$ is odd. If $\alpha$ is the smallest root of $P$, then $P'(\alpha) \geq 0$. If $P'(\alpha)=0$, $P''(\alpha)=(n+1)\alpha^{n-1}>0$ which is absurd, so $P'(\alpha)>0$. If follows again from \eqref{1} that $\alpha \in ]-\left(s^{(n+1)}\right)^{1/n+1}, 0[$. Besides, it is the only root, because otherwise there would exist a root $\beta$ in $]\alpha, 0[$ such that $P'(\beta) \leq 0$, which contradicts again \eqref{1}.
\end{proof}

\section{Integral representation}

For $\mathrm{Re}(s)>0$, $s^{(k)}=\frac{\Gamma(s+k)}{\Gamma(s)}=\frac{1}{\Gamma(s)} \int_0^{+\infty} t^{k+s-1}e^{-t}dt$, so
\[
P_n(s, z)=\frac{1}{\Gamma(s)} \int_{0}^{+\infty} \frac{t^{n+1}-z^{n+1}}{t-z} t^{s-1}e^{-t}dt.
\] 

\begin{remark}
We can give another proof of \ref{basic} (ii) with the integral representation. Indeed, 
\begin{align*}
sP_n(s+1, z)-zP_n(z)&= \frac{1}{\Gamma(s)} \int_{0}^{+\infty} (t^{n+1}-z^{n+1})\, t^{s-1}e^{-t}dt \\
&=\frac{\Gamma(s+n+1)}{\Gamma(s)}-z^{n+1} \\
&=s^{(n+1)}-z^{n+1}.
\end{align*}
\end{remark}

\begin{proposition} \label{zendan}
If $s>0$ and $P(s, \alpha)=0$, then either $\alpha \in \mathbb{R}_-$ and $n$ is odd, or $\alpha^{n+1} \notin \mathbb{R}_{+}$.
\end{proposition}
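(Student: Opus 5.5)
Suppose $s>0$ and $P_n(s,\alpha)=0$ with $\alpha^{n+1}\in\mathbb{R}_+$, and write $c=\alpha^{n+1}\ge 0$. The plan is to show that this forces $\alpha\in\mathbb{R}_-$ with $n$ odd. By Proposition \ref{locus}, the real roots are understood: for $n$ even there are none, and for $n$ odd the only one is negative. So the remaining task is to exclude non-real $\alpha$ with $\alpha^{n+1}\ge 0$. Note that $\alpha\neq 0$, since $P_n(s,0)=s^{(n)}>0$; hence $c>0$.

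The main tool is the integral representation. Since $\alpha^{n+1}=c$, we have $t^{n+1}-\alpha^{n+1}=t^{n+1}-c$, so
\[
0=\Gamma(s)P_n(s,\alpha)=\int_0^{+\infty}\frac{t^{n+1}-c}{t-\alpha}\,t^{s-1}e^{-t}\,dt .
\]
If $\alpha\notin\mathbb{R}$, write $\frac{1}{t-\alpha}=\frac{t-\bar\alpha}{|t-\alpha|^2}$ and take imaginary parts. This gives
\[
\mathrm{Im}(\alpha)\int_0^{+\infty}\frac{t^{n+1}-c}{|t-\alpha|^2}\,t^{s-1}e^{-t}\,dt=0,
\]
so that
\[
\int_0^{+\infty}\frac{t^{n+1}-c}{|t-\alpha|^2}\,t^{s-1}e^{-t}\,dt=0 .
\]
The real part gives the analogous vanishing with an extra factor $(t-\mathrm{Re}\,\alpha)$:
\[
\int_0^{+\infty}\frac{(t^{n+1}-c)(t-\mathrm{Re}\,\alpha)}{|t-\alpha|^2}\,t^{s-1}e^{-t}\,dt=0 .
\]
Subtracting $\mathrm{Re}\,\alpha$ times the first identity from the second yields
\[
\int_0^{+\infty}\frac{(t^{n+1}-c)\,t}{|t-\alpha|^2}\,t^{s-1}e^{-t}\,dt=0 .
\]

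Now set $t_0=c^{1/(n+1)}>0$ and $w(t)=t^{s-1}e^{-t}/|t-\alpha|^2>0$. The function $f(t)=t^{n+1}-c$ changes sign exactly once on $]0,+\infty[$, at $t_0$: it is negative before and positive after. From the first identity, $\int f w=0$. From the last one, $\int f\,t\,w=0$. Combining them,
\[
0=\int_0^{+\infty} f(t)\,(t-t_0)\,w(t)\,dt .
\]
But $f(t)(t-t_0)\ge 0$ everywhere, with equality only at $t=t_0$. So the integrand is nonnegative and not identically zero, which is a contradiction. Hence $\alpha$ must be real. Since $\alpha^{n+1}=c>0$, either $\alpha>0$, which is impossible by Proposition \ref{locus}, or $\alpha<0$ with $n$ odd, as claimed.

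The delicate step is extracting two real moment conditions from one complex equation, together with the observation that $f$ has a single sign change, which turns the pair into a contradiction. Checking the sign-change property is routine. Convergence is fine because $s>0$ and $|t-\alpha|$ is bounded below when $\alpha\notin\mathbb{R}$.
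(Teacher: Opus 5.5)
Your proof is correct and is essentially the paper's argument. The paper applies to the integral representation a real linear form $\ell$ vanishing at $1/(|\alpha|-\alpha)$, so that $\ell\bigl(1/(t-\alpha)\bigr)$ has the sign of $t-|\alpha|$, and your combination of real and imaginary parts is exactly such an $\ell$ (note $t_0=c^{1/(n+1)}=|\alpha|$ and $\ell\bigl(1/(t-\alpha)\bigr)=(t-t_0)/|t-\alpha|^2$); the only difference is that you check this sign property by explicit computation, whereas the paper reads it off the circle-and-chord picture.
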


\begin{proof}
The root $\alpha$ cannot be positive. Assume that $\mathrm{Im}\,(\alpha) >0$. If $t$ goes through the real line, the curve $t \mapsto \frac{1}{t-\alpha}$ travels through the circle 
\[
\mathscr{C}\left(\frac{\mathbf{i}}{2\mathrm{Im}\,(\alpha)}, \frac{1}{ 2\mathrm{Im}\,(\alpha)}\right),
\] 
originating and terminating at $0$. Let $\ell$ be any nonzero real linear form of the complex plane that vanishes on $\frac{1}{|\alpha|-\alpha}\cdot$ As $\left[0, \frac{1}{|\alpha|-\alpha}\right]$ is a chord on the previous circle, we can assume that 
\[
\begin{cases}
\ell \left(\frac{1}{t-\alpha}\right)<0 \quad \textrm{if} \quad t < |\alpha| \vspace{0.2cm} \\
\ell \left(\frac{1}{t-\alpha}\right)>0 \quad \textrm{if}  \quad t > |\alpha|.
\end{cases}
\]
In particular, for all real $t \neq |\alpha|$, we have
\[
\ell \left(\frac{t^{n+1}-\alpha^{n+1}}{t-\alpha}\right)>0.
\]
Hence, $\ell(P_n(s, \alpha))>0$.
\end{proof}

\begin{remark}
With the integral representation, we can see directly that $P_n(s, z)$ is positive if $s>0$, $z \in \mathbb{R}$ and $n$ is even. Indeed, for $z$ real and $t \neq z$,
\[
\frac{t^{n+1}-z^{n+1}}{t-z} >0.
\]
\end{remark}

\begin{corollary}
If $s>0$, the roots of $P_n(s,z)$ are simple.
\end{corollary}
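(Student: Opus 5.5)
We argue by contradiction: suppose $s>0$ and $\alpha$ is a multiple root of $P_n(s,z)$, so that $P_n(s,\alpha)=\partial_z P_n(s,\alpha)=0$. The differential identity of Proposition \ref{basic} (iii),
\[
(z-s-n)P_n(s, z)+z \partial_z P_n(s,z)=z^{n+1}-s^{(n+1)},
\]
evaluated at $z=\alpha$ then gives $\alpha^{n+1}=s^{(n+1)}$. Since $s>0$, the number $s^{(n+1)}$ is a positive real, so $\alpha^{n+1}\in\mathbb{R}_{+}$, and in fact $\alpha^{n+1}>0$. Equivalently, one can phrase this through Corollary \ref{yolo} (i): a vanishing discriminant $\Delta_n(s)$ forces $P_n(s,z)$ and $z^{n+1}-s^{(n+1)}$ to share a root, because $s^{(n)}\neq 0$.

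Next we bring in Proposition \ref{zendan}. For a root $\alpha$ of $P_n(s,z)$, it says that either $\alpha^{n+1}\notin\mathbb{R}_+$, or $\alpha\in\mathbb{R}_-$ with $n$ odd. The first alternative contradicts what we just found. So $n$ is odd and $\alpha$ is a negative real number; this case is the one that needs a separate check.

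To handle it, note that by Proposition \ref{locus} the unique real root lies in the open interval $]-\left(s^{(n+1)}\right)^{1/n+1},0[$. In particular $|\alpha|^{n+1}<s^{(n+1)}$. But $n+1$ is even, so $\alpha^{n+1}=|\alpha|^{n+1}<s^{(n+1)}$, which contradicts $\alpha^{n+1}=s^{(n+1)}$. The same conclusion also follows directly from the proof of Proposition \ref{locus}, where it is shown that $P'(\alpha)>0$ at that root, so the root is not multiple.

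The only step requiring care is this negative real case, since Proposition \ref{zendan} alone does not exclude it. Once it is dealt with, every root is simple.
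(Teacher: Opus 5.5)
Your proof is correct and follows the paper's argument: a double root must satisfy $\alpha^{n+1}=s^{(n+1)}$, and Proposition~\ref{locus} rules out the remaining negative real case because $\alpha^{n+1}<s^{(n+1)}$ there. You get the first step by evaluating Proposition~\ref{basic}~(iii) directly where the paper uses the resultant identity of Corollary~\ref{yolo}, which amounts to the same thing, and you spell out the appeal to Proposition~\ref{zendan} that the paper's proof makes only implicitly.
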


\begin{proof}
Thanks to corollary \ref{yolo} (ii), it suffices to prove that $P_n(s, \alpha) $ is nonzero if $\alpha^{n+1}=s^{(n+1)}$. In this case, the only solution is that $\alpha$ be in $\mathbb{R}_-$, which is impossible thanks to proposition \ref{locus}.
\end{proof}

\begin{corollary}
If $s\geq 0$, $(-1)^{n(n-1)/2} \theta_n(s) >0$. 
\end{corollary}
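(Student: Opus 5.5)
We want the sign of $\theta_n(s)$ for $s\ge 0$. The plan is a continuity argument: show $\theta_n$ never vanishes on $[0,+\infty[$, then fix its sign by one explicit evaluation.

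\emph{Nonvanishing for $s>0$.} By proposition \ref{malin} (i), $\Delta_n(s)=s^{(n-1)}\cdots s^{(1)}\,\theta_n(s)$. For $s>0$ every factor $s^{(k)}$ is positive, and $\Delta_n(s)\neq 0$ because the roots of $P_n(s,z)$ are simple by the preceding corollary. Hence $\theta_n(s)\neq 0$ on $]0,+\infty[$. Since $\theta_n$ is a polynomial in $s$, it has constant sign on this interval.

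\emph{Nonvanishing at $s=0$.} Here $\theta_n(0)=M(0,1,\dots,n)$ is a determinant with nonnegative integer entries, and one can evaluate it directly.
\begin{itemize}
\item The first row of $M$ is $(1,0,0,\dots,0)$, since $a_1=0$. Expanding along it gives the $n\times n$ minor formed by rows $2,\dots,n+1$ and columns $2,\dots,n+1$.
\item Row $i+1$ of $M$ starts from $a_{i+1}=i$, giving the cyclic products $i,\ i(i+1),\ \dots$ with indices taken mod $n+1$. Once the cycle wraps past $a_{n+1}=n$ it hits $a_1=0$, so all later entries in that row vanish.
\item The entry in row $i+1$, column $j+1$ is therefore $i(i+1)\cdots(i+j-1)$ for $i+j-1\le n$, and $0$ otherwise.
\end{itemize}
This minor is anti-triangular: nonzero entries lie on or above the anti-diagonal $i+j=n+1$. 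Its determinant is $(-1)^{n(n-1)/2}$ times the product of the anti-diagonal entries $i(i+1)\cdots n=n!/(i-1)!$, so it is nonzero.

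\emph{Sign.} This computation also fixes the sign: $(-1)^{n(n-1)/2}\theta_n(0)=\prod_{i=1}^n n!/(i-1)!>0$. By continuity, $(-1)^{n(n-1)/2}\theta_n(s)$ is nonzero and positive on all of $[0,+\infty[$.

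\emph{Cross-check of the sign.} As $s\to\infty$, $\Delta_n(s)\sim c\,s^{n(n+1)}$ by the Puiseux asymptotics (roots $\sim\zeta s$). The sign of $\Delta_n$ for large $s$ is that of the discriminant of $(z^{n+1}-1)/(z-1)$. One could match this against the expected sign $(-1)^{n(n-1)/2}$, which also follows from the root configuration: there are $\lfloor n/2\rfloor$ pairs of nonreal roots and no other real pairs.

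\emph{Main obstacle.} The hardest step is evaluating the determinant at $s=0$, specifically getting the anti-triangular permutation sign and the index bookkeeping right. If that becomes messy, a fallback avoids the determinant entirely. By proposition \ref{malin}, $\operatorname{sign}\Delta_n=\operatorname{sign}\theta_n$ for $s>0$. The discriminant of a real polynomial with simple roots has sign $(-1)^{r}$, where $r$ is the number of pairs of nonreal conjugate roots. By proposition \ref{locus}, $r=n/2$ for $n$ even and $r=(n-1)/2$ for $n$ odd. In both cases $(-1)^r=(-1)^{n(n-1)/2}$, which one checks case by case mod 4. The endpoint $s=0$ then follows either from the direct computation above or by noting $\theta_n(0)\neq0$.
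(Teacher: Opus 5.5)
Your proof is correct. Its skeleton is the paper's: you get $\theta_n(s)\neq 0$ on $]0,+\infty[$ from Proposition \ref{malin} (i) and the simplicity of the roots, then fix the sign at $s=0$. The difference is in how $\theta_n(0)$ is obtained.

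\begin{itemize}
\item The paper lets $s\to 0^+$, uses the root asymptotics $z\sim(-s(n-1)!)^{1/n}$ to get $\mathrm{res}_z(z^{n+1}-s^{(n+1)},P_n(s,z))\sim(-1)^n(n!)^n s^n$, and compares this with Proposition \ref{malin} (ii).
\item You evaluate $M(0,1,\dots,n)$ directly. Since $a_1=0$, the first row becomes $(1,0,\dots,0)$ and every cyclic product is truncated, which leaves an anti-triangular minor.
\end{itemize}

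Your bookkeeping is right: the entry is $i(i+1)\cdots(i+j-1)$ for $i+j\le n+1$, the anti-diagonal entries are $n!/(i-1)!$, and the reversal permutation has sign $(-1)^{n(n-1)/2}$. Your value $\prod_{i=1}^n n!/(i-1)!=(n!)^n/\prod_{k=1}^{n-1}k!$ agrees with the paper's, and it matches $\theta_2(0)=-4$, $\theta_3(0)=-108$ and $\theta_4(0)=27648$.

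What each route buys: yours is exact and purely algebraic, and it bypasses both the Puiseux-type asymptotics and the resultant identity. The paper's route reuses the small-$s$ asymptotics it needs anyway for the angular-sector theorem, and it never has to evaluate the determinant.

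Your fallback is also sound and genuinely different from anything in the paper. Since $\Delta_n(s)\neq 0$ for $s>0$, its sign is $(-1)^r$, where $r$ is the number of conjugate pairs of roots. Proposition \ref{locus} then gives $(-1)^r=(-1)^{n(n-1)/2}$, so strict positivity on $]0,+\infty[$ follows with no continuity argument at all. The endpoint $s=0$ still needs your explicit computation, though. The alternative of \emph{noting} $\theta_n(0)\neq 0$ is not an independent argument.

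One slip in your cross-check, which does not affect the proof: the $\binom{n}{2}$ squared differences of roots are each of order $s$, so $\Delta_n(s)\sim c\,s^{n(n-1)}$, not $c\,s^{n(n+1)}$. This is consistent with $\deg\theta_n=n(n-1)/2$.
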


\begin{proof}
Thanks to proposition \ref{malin} (i), if $s \neq 0$, $\theta_n(s) \neq 0$. Besides, we can make an asymptotic expansion: as $s$ goes to $0$, the roots of $P_n(s, z)$ are asymptotically $(-s(n-1)!)^{1/n}$ so 
\begin{align*}
\mathrm{res}_z (z^{n+1}- s^{(n+1)}, & P_n(s, z))  \\
&\sim_{s \to 0^+} \prod_{\zeta^n=-1}  (-\zeta (s(n-1)!)^{n+1/n}-s^{(n+1)}) \\
&  \sim_{s \to 0^+}  (-1)^{n} (n!)^{n} s^{n}
\end{align*}
but on the other hand, by proposition \ref{malin} (ii), 
\[
\mathrm{res}_z (z^{n+1}- s^{(n+1)},  P_n(s, z)) \sim_{s \to 0^+} (-1)^{n(n+1)/2} s^n \times\prod_{k=1}^{n-1} k! \times \theta_n(s)
\]
so $\theta_n(0) = (-1)^{n(n-1)/2} \frac{(n!)^n}{\prod_{k=1}^{n-1} k!} \cdot$ The result follows.
\end{proof}

\begin{remark}
Proving that $(-1)^{n(n-1)/2} \theta_n(s) \geq 0$ is completely elementary. Indeed, 
\[
\mathrm{res}_z (z^{n+1}- s^{(n+1)},  P_n(s, z)) = \prod_{\alpha^{n+1}=s^{(n+1)}} P_n(s, \alpha)
\]
and all pairs of complex conjugate $(n+1)$-th roots of $s^{(n+1)}$ produce conjugate terms in the product, hence a positive contribution. It remains to look at possible contributions when $\alpha$ is real. If $\alpha>0$, then $P_n(s, \alpha)>0$. The case $\alpha<0$ occurs when $n$ is odd, then $P_n(s, \alpha)<0$ thanks to proposition \ref{locus}. Hence $(-1)^n \mathrm{res}_z (z^{n+1}- s^{(n+1),  P_n(s, z)}) \geq 0$ and we conclude using proposition \ref{malin} (ii).
\end{remark}

\section{Angular sectors}
For any positive integer $p$, the $p$-regular open angular sectors are the open angular sectors delimited by two half lines originating at $0$ and passing by two successive $p$-roots of unity.
\begin{theorem}
If $s>0$ then:
\begin{enumerate}
\item[(i)] If $n$ is even, then $P_n(s,z)$ has one root in any of the $n+1$-regular open angular sectors, except the one containing the negative real line.
\item[(ii)] If $n$ is odd, then $P_n(s,z)$ has one negative real root and one root in any of the $n+1$-regular open angular sectors except the two adjacent to the negative real line.
\end{enumerate}
\end{theorem}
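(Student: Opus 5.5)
A continuity (homotopy) argument in $s$ is the natural route: the roots never cross the rays of $(n+1)$-th roots of unity, and their distribution among the sectors is read off as $s\to+\infty$. By Proposition \ref{zendan}, for every $s>0$ any root $\alpha$ of $P_n(s,z)$ satisfies either $\alpha^{n+1}\notin\mathbb{R}_+$, or $n$ is odd and $\alpha\in\mathbb{R}_-$. So no root lies on a ray $\mathbb{R}_{>0}\zeta$ with $\zeta^{n+1}=1$, except the negative real root when $n$ is odd (then $-1$ is an $(n+1)$-th root of unity). Also $P_n(s,0)=s^{(n)}\neq 0$. The polynomial is monic of degree $n$ with coefficients continuous in $s$. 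Hence its roots depend continuously on $s\in\,]0,+\infty[$ as an unordered $n$-tuple, and they stay bounded on compact $s$-intervals. Fix an open $(n+1)$-regular sector $S$, and if $n$ is odd assume $S$ is not adjacent to $\mathbb{R}_-$. Its boundary rays carry no roots for any $s>0$, so the number $N_S(s)$ of roots in $S$, counted with multiplicity, is locally constant in $s$ (for instance via the argument principle on a truncated sector). It is therefore constant on $]0,+\infty[$.

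Next I would compute $N_S(s)$ for $s$ large. I would rescale $z=sw$. Since $s^{(k)}=s^k(1+O(1/s))$ for fixed $k$,
\[
s^{-n}P_n(s,sw)=\sum_{k=0}^n w^{n-k}+O(1/s)=\frac{w^{n+1}-1}{w-1}+O(1/s),
\]
uniformly for $w$ in compact sets. By Hurwitz's theorem, the roots of $P_n(s,sw)$ in $w$ converge to the $n$ roots of unity $\zeta\neq 1$ of order $n+1$, each simple; this is the Puiseux remark of the introduction. These limits lie on the boundary rays, so I need the side on which each root approaches. For that I would take the first-order correction. Write $s^{(k)}=s^k\bigl(1+\tfrac{k(k-1)}{2s}+O(s^{-2})\bigr)$ and set $w=\zeta(1+\varepsilon)$. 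Expanding $\sum_k w^{n-k}\bigl(1+\tfrac{k(k-1)}{2s}\bigr)$ gives $\varepsilon=c_\zeta/s+O(s^{-2})$ with an explicit $c_\zeta$ computable from the sums $\sum_k \binom{k}{2}\zeta^{-k}$. The claim to verify is that $\mathrm{Im}(c_\zeta)$ has a sign that pushes each root into a definite adjacent sector, say always the counterclockwise one for $\mathrm{Im}\,\zeta>0$ and, by conjugation symmetry, the clockwise one for $\mathrm{Im}\,\zeta<0$.

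This perturbation computation is the main obstacle. One must show $\mathrm{Im}(c_\zeta)\neq 0$, with a sign consistent enough that the $n$ roots fill exactly the sectors claimed, one in each. The count then comes out right: for $n$ even there are $n$ roots of unity $\zeta\neq1$ and the $n+1$ sectors minus the one containing $\mathbb{R}_-$ leave $n$ sectors. By conjugation symmetry each of the two sectors adjacent to $\mathbb{R}_+$ must receive the root coming from its non-$1$ endpoint, which suggests roots move away from $\mathbb{R}_+$, i.e. towards $\mathbb{R}_-$. If the direct computation of $c_\zeta$ becomes messy, I would instead use the integral representation near $z\approx\zeta s$, or argue by counting. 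In the counting approach, each sector count is constant in $s$ and the counts sum to $n$ (minus $1$ for the real root when $n$ odd). Roots come in conjugate pairs and none is real in the even case (Proposition \ref{locus}). A monotonicity in the argument as $s$ varies, obtained from Proposition \ref{zendan}'s linear form $\ell$, may then force at most one root per sector.

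For $n$ odd, Proposition \ref{locus} gives exactly one real root, which is negative and lies in $]-(s^{(n+1)})^{1/(n+1)},0[$. The remaining $n-1$ roots are non-real, and the same large-$s$ analysis places them in the $n-1$ sectors not adjacent to $\mathbb{R}_-$. The conjugate roots tending to $\zeta=-1$ from both sides must in fact coalesce into the real root, which is consistent since $-1$ is then one of the limit points. Combining with the constancy of $N_S$ gives the statement, and simplicity of the roots follows immediately, with one root per sector plus at most one real root.
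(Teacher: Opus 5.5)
Your framework matches the paper's: by Proposition \ref{zendan} no root lies on a ray $\mathbb{R}_{>0}\zeta$ with $\zeta^{n+1}=1$, $\zeta\neq-1$, so each sector count is constant in $s$. The gap is the step that actually determines the counts. You evaluate at $s\to+\infty$, where the rescaled roots converge to the $(n+1)$-th roots of unity $\zeta\neq1$, i.e.\ onto the very rays that bound the sectors. The limit alone therefore decides nothing, and everything rests on the sign of $\mathrm{Im}(c_\zeta)$, which you leave as ``the claim to verify''.

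Moreover, the sign you guess is backwards. Suppose roots with $\mathrm{Im}\,\zeta>0$ were pushed counterclockwise. Then for $n=2k$ the root near $e^{2k\pi\mathbf{i}/(2k+1)}s$ would enter the sector containing $\mathbb{R}_-$, and the sector $0<\mathrm{Arg}(z)<2\pi/(2k+1)$ would be empty, contradicting the statement. Already for $n=2$ the upper root $\frac{-s+\mathbf{i}\sqrt{3s^2+4s}}{2}$ has argument $\pi-\arctan\sqrt{3+4/s}<2\pi/3$. It sits clockwise of $e^{2\pi\mathbf{i}/3}$, towards $\mathbb{R}_+$. Your own remark that the sector between $1$ and $e^{2\pi\mathbf{i}/(n+1)}$ must be fed from its non-$1$ endpoint says the same thing, contrary to your reading of it.

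The fallbacks do not close the gap. Proposition \ref{zendan} is a nonvanishing statement at each fixed $s$ and gives no monotonicity of arguments in $s$. Counting with conjugation symmetry cannot exclude, say, two roots in one sector and none in the neighbouring one. Also, for $n$ odd only one rescaled root tends to $-1$, since $-1$ is a simple zero of $(w^{n+1}-1)/(w-1)$. That root is real by symmetry, and no pair ``coalesces''.

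Your route can be repaired. Put $G(w)=\frac{w^{n+1}-1}{w-1}$ and $H(w)=\sum_k\binom{k}{2}w^{n-k}$, so that $G'(\zeta)=\frac{n+1}{\zeta(\zeta-1)}$. Summing $H(\zeta)$ in closed form gives $\varepsilon=\frac{1}{s}\bigl(\frac{n-2}{2}-\frac{\zeta}{1-\zeta}\bigr)+O(s^{-2})$. For $\zeta=e^{\mathbf{i}\phi}$ one has $\mathrm{Im}\frac{\zeta}{1-\zeta}=\frac12\cot\frac{\phi}{2}$, hence $\mathrm{Im}\,\varepsilon=-\frac{1}{2s}\cot\frac{\phi}{2}+O(s^{-2})$. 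This is nonzero for $\zeta\neq\pm1$ and always points towards $\mathbb{R}_+$, which produces exactly the claimed distribution.

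The paper avoids any such perturbative sign computation by letting $s\to0^+$ instead. There $P_n(s,z)\approx z^n+(n-1)!\,s$, so the roots are asymptotic to $(-s(n-1)!)^{1/n}$. Their limiting arguments, those of the $n$-th roots of $-1$, lie strictly inside the open $(n+1)$-regular sectors, except $-1$ itself when $n$ is odd. There is at most one per sector, since they are $2\pi/n$ apart. The missing idea is to take the end of the parameter range where the limit configuration is in general position.
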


\begin{proof}
Thanks to proposition \ref{zendan} and the continuity of roots of $P(s, z)$, it suffices to prove the result for one value of $s$, so we can argue asymptotically as  $s$ goes to  $0^{+}$. In this case, the $n$ roots of $z \mapsto P_m(s,z)$ are asymptotically the roots of the equation $z^n+(n-1)! s=0$, that is $z \sim (-s(n-1)!)^{1/n} $. Hence the arguments of the $n$ roots of $P_n(s, z)$ are asymptotically the arguments of the $n$-th roots of $-1$. We distinguish the two situations, $n$ even and $n$ odd.
\par \medskip
If $n$ is even, $n=2k$, then the $2k$-th roots of $-1$ have argument $\frac{(2\ell+1) \pi}{2k}$ for $-k \leq \ell \leq k-1$. Since
\[
\frac{(2\ell+1) \pi}{2k}=\frac{2\ell \pi}{2k+1} + \underbrace{\frac{2 \ell+2k+1}{4k}}_{\textrm{in}\,]0,1[} \times \frac{2\pi}{2k+1}
\]
we see that these roots lie in the $2k$ distinct $2k+1$-regular open angular sectors 
\[
\frac{2\ell \pi}{2k+1} < \mathrm{Arg} (z) < \frac{2(\ell+1) \pi}{2k+1} \cdot
\]
where $-k \leq \ell \leq k-1$. The only missing angular sector is for $\ell=k$, that is
\[
\frac{2k \pi}{2k+1} < \mathrm{Arg} (z) < \frac{(2k+2) \pi}{2k+1},
\]
it is exactly the one containing the negative real line.
\par \medskip
If $n$ is odd, $n=2k+1$, then the $2k+1$-th roots of $-1$ have argument  $\frac{(2\ell+1) \pi}{2k+1}$ for $-k \leq \ell \leq k$. As before, we write
\[
\frac{(2\ell+1) \pi}{2k+1}=\frac{2\ell \pi}{2k+2} +  \underbrace{\frac{k+\ell+1}{2k+1}}_{\textrm{in}\,]0,1[ \, \textrm{if}\, \ell \neq k} \times \frac{2\pi}{2k+2} \cdot
\]
Hence the first $2k$ among $2k+1$-th roots of $-1$ are in $2k$ distinct $2k+1$-regular open angular sectors. The two remaining angular sectors are
\[
\begin{cases}
\frac{(2k+1) \pi}{2k+2} < \mathrm{Arg} (z) < \pi \vspace{0.2cm}\\
\pi < \mathrm{Arg} (z) < \frac{(2k+3) \pi}{2k+2}
\end{cases}
\]
which are the two angular sectors bordering the negative real line. Since we know that $P_n$ has one negative real root, the result follows.  
\end{proof}

\section{The stability conjecture}
In this section, we explain why the present work leads naturally to a conjecture involving Hurwitz stability of some resultants. Let us give the first values of the polynomials $\theta_n(s)$:

\begin{align*}
\theta_2(s)&=-3s-4 \\
\theta_3(s)&= -16 s^{3}-96 s^{2}-184 s-108 \\
\theta_4(s)&= 125 s^{6}+2000 s^{5}+13025 s^{4}+43950 s^{3}+80500 \\
&\qquad + s^{2}+75240 s +27648
\end{align*}

Numerical calculations suggest that the coefficients of $(-1)^{n(n-1)/2} \theta_n(s)$ are all positive. Proving it would give a much simpler proof that $\theta_n(s)$ does not vanish for $s \geq 0$, but the explicit expression of these coefficients seems daunting. One way to prove that a real polynomial has positive coefficients is to establish that it is Hurwitz stable, which means that all its roots have negative real part. Again, some numerical experiments suggest it is the case, as can be seen below in the case $n=20$ using Maple:

\par \medskip
\begin{figure}[ht]
  \includegraphics[width=0.8\linewidth]{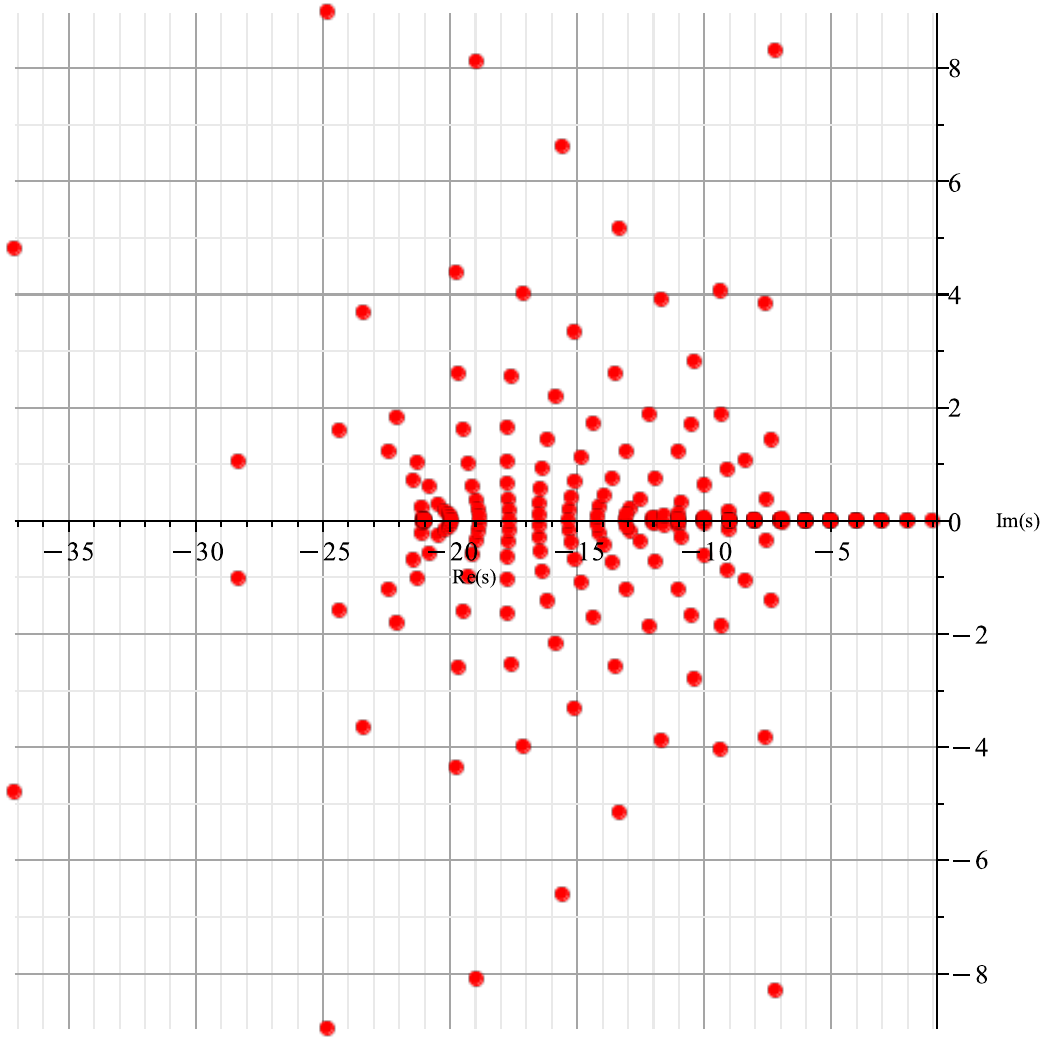}
  \caption{Roots of $\theta_{20}(s)$.}
  \label{fig:racines2}
\end{figure}

An upgrade of the general problem could be that $\theta_n(s)$ is Hurwitz stable for all $s$. Besides, there is a way to include an additional real parameter in the picture using the following result:

\begin{proposition}
The resultant $\mathrm{res}_z(P_n(s,X), P_{n}(s+1, X))$  is equal to 
\[
(-1)^{n(n-1)/2}\frac{s^{(n)}s^{(n-1)} \ldots s^{(1)}}{s^n} \theta_n(s).
\]
\end{proposition}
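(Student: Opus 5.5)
This is a direct combination of Corollary \ref{yolo} (ii) and Proposition \ref{malin} (i). The plan is to express $\Delta_n(s)$ through $\theta_n(s)$, substitute, and then divide by $s^n$, first treating $s$ as a formal variable and then specialising.

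First, Corollary \ref{yolo} (ii) gives the polynomial identity in $s$
\[
s^n\, \mathrm{res}_z(P_n(s,z),P_n(s+1,z)) = (-1)^{n(n-1)/2} s^{(n)} \Delta_n(s).
\]
Next, Proposition \ref{malin} (i) gives $\Delta_n(s)=s^{(n-1)}\cdots s^{(1)}\theta_n(s)$. Substituting yields
\[
s^n\, \mathrm{res}_z(P_n(s,z),P_n(s+1,z)) = (-1)^{n(n-1)/2} s^{(n)}s^{(n-1)}\cdots s^{(1)}\,\theta_n(s).
\]

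It remains to divide by $s^n$. Both sides are polynomials in $s$, since the resultant of two polynomials that are monic in $z$ with coefficients in $\mathbb{C}[s]$ lies in $\mathbb{C}[s]$. The identity therefore holds in $\mathbb{C}[s]$, and dividing in $\mathbb{C}(s)$ gives the stated formula as an identity of rational functions. Each $s^{(k)}$ with $k\ge 1$ contains the factor $s$, so $s^n$ divides $s^{(n)}\cdots s^{(1)}$. The right-hand side is thus a genuine polynomial, and the identity specialises to every complex $s$, including $s=0$ by continuity.

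The only point needing care is the legitimacy of the earlier identities as polynomial identities in $s$. The one place this matters is the division by $s^{(n)}$ implicit in Proposition \ref{malin}: it was performed there for generic $s$ and extends by polynomial continuity. I would also check that the resultant's variable is $z$ (the statement writes $X$, which is just a renaming). No genuine obstacle is expected.
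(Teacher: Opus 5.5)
Your proof is correct and is essentially the paper's argument. The paper combines Corollary \ref{yolo} (ii) with Proposition \ref{malin} (ii) rather than (i), but those two parts are interchangeable via Corollary \ref{yolo} (i), and your extra care in treating everything as identities in $\mathbb{C}[s]$ (both polynomials monic in $z$, and $s^n$ dividing $s^{(n)}\cdots s^{(1)}$) is sound and makes explicit what the paper leaves implicit.
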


\begin{proof}
This follows from corollary \ref{yolo} (ii) and proposition \ref{malin} (ii)
\end{proof}

As a corollary, we see that the stability of $\theta_n(s)$ and of the resultant 
\[
\mathrm{res}_z(P_n(s,X), P_{n}(s+1, X))
\] 
are equivalent. It is even possible to extend the conjecture a bit more by decoupling the integers in the resultants. The most general version of the conjecture we propose is:

\begin{conjecture}
For any positive integers $n, m$ such that $n \leq m$, and any $a>0$, the resultant $\mathrm{res}_z(P_n(s, z), P_m(s+a,z))$ is stable.
\end{conjecture}
For $n=m$ and $a=1$, the conjecture boils down to the stability of $\theta_n(s)$. 
\par \medskip
For given values of $n$ and $m$, it is possible to use the Routh algorithm (see \cite[Chap. V, \S 3, Thm. 2]{Gantmacher1959vol2}) to check the veracity of this conjecture. Numerical experiments indicate that the first column of the Routh scheme of the resultant of $P_n(s,z)$ and $P_m(s+a,z)$ are rational functions in $a$ whose numerator and denominator have nonnegative coefficients.
\par \medskip
Some simple cases of the conjecture seem out of reach, for instance $n=1$. It says that for any $a>0$, $P_m(s+a, -s)$ is stable.

\newpage
\bibliographystyle{plain}
\bibliography{biblio}

\begin{thebibliography}{1}

\bibitem{Gantmacher1959vol2}
F.~R. Gantmacher.
\newblock {\em The Theory of Matrices, Vol.~2}.
\newblock Chelsea Publishing Company, New York, 1959.
\newblock Translated from the Russian by K.~A.~Hirsch.

\bibitem{Hilbert1888}
David Hilbert.
\newblock Ueber die discriminante der im endlichen abbrechenden
  hypergeometrischen reihe.
\newblock {\em Journal f\"ur die reine und angewandte Mathematik},
  103:337--345, 1888.

\bibitem{Hurwitz1891}
Adolf Hurwitz.
\newblock Ueber die nullstellen der hypergeometrischen reihe.
\newblock {\em Mathematische Annalen}, 38:452--458, 1891.

\bibitem{Klein1890}
Felix Klein.
\newblock Ueber die nullstellen der hypergeometrischen reihe.
\newblock {\em Mathematische Annalen}, 37:573--590, 1890.

\bibitem{Rahman}
Q.~I. Rahman and G.~Schmeisser.
\newblock {\em Analytic theory of polynomials}, volume~26 of {\em Lond. Math.
  Soc. Monogr., New Ser.}
\newblock Oxford: Oxford University Press, 2002.

\bibitem{Schur1931}
Issai Schur.
\newblock Affektlose gleichungen in der theorie der {L}aguerreschen und
  {H}ermiteschen polynome.
\newblock {\em Journal f\"ur die reine und angewandte Mathematik}, 165:52--58,
  1931.

\bibitem{Szego1975}
G{\'a}bor Szeg{\H{o}}.
\newblock {\em Orthogonal Polynomials}, volume~23 of {\em American Mathematical
  Society Colloquium Publications}.
\newblock American Mathematical Society, Providence, RI, 4 edition, 1975.

\end{thebibliography}

\end{document}